\newcommand{\ad}{{\rm ad}}
\newcommand{\tr}{{\rm tr}}
\newcommand{\W}{\mathcal{W}}
\newcommand{\g}{\mathfrak{g}}
\newcommand{\s}{\mathfrak{S}}
\newcommand{\X}{\mathfrak{X}}
\newcommand{\nJ}{\norm{\nabla J}}
\newcommand{\R}{\mathbb{R}}
\newcommand{\thmref}[1]{Theorem~\ref{#1}}
\newcommand{\propref}[1]{Proposition~\ref{#1}}
\newcommand{\tabref}[1]{Table~\ref{#1}}
\newcommand{\norm}[1]{\Vert #1 \Vert}
\begin{document}

\title{On three-parametric Lie groups as quasi-K\"ahler manifolds with Killing Norden metric}

\author{M. Manev$^*$, K. Gribachev and D. Mekerov}
\address{Faculty of Mathematics and Informatics, University of Plovdiv, \\
236 Bulgaria Blvd., Plovdiv, 4003, Bulgaria\\
$^*$E-mail: mmanev@pu.acad.bg, tel./fax: +359 32 261 794\\
www.fmi-plovdiv.org}

\begin{abstract}
A 3-parametric family of 6-dimensional quasi-K\"ahler manifolds
with Norden metric is constructed on a Lie group. This family is
characterized geometrically. The condition for such a 6-manifold
to be isotropic K\"ahler is given.\footnote{2000 \emph{Mathematics
Subject Classification}. Primary 53C15, 53C50; Secondary 32Q60,
53C55}
\end{abstract}

\keywords{almost complex manifold, Norden metric, quasi-K\"ahler
manifold, indefinite metric, non-integrable almost complex
structure, Lie group}

\bodymatter

\section*{Introduction}

It is a fundamental fact that on an almost complex manifold with
Hermitian metric (almost Hermitian manifold), the action of the
almost complex structure on the tangent space at each point of the
manifold is isometry. There is another kind of metric, called a
Norden metric or a $B$-metric on an almost complex manifold, such
that the action of the almost complex structure is anti-isometry
with respect to the metric. Such a manifold is called an almost
complex manifold with Norden metric \cite{GaBo} or with $B$-metric
\cite{GaGrMi}. See also Ref.~\refcite{GrMeDj} for generalized
$B$-manifolds. It is known \cite{GaBo} that these manifolds are
classified into eight classes.

The purpose of the present paper is to exhibit, by construction,
almost complex structures with Norden metric on Lie groups as
6-manifolds, which are of a certain class, called quasi-K\"ahler
manifold with Norden metric. It is proved that the constructed
6-manifold is isotropic K\"ahlerian \cite{GRMa} if and only if it
is scalar flat or it has zero holomorphic sectional curvatures.


\section{Almost Complex Manifolds with Norden Metric}\label{sec_1}

\subsection{Preliminaries}\label{sec-prelim}

Let $(M,J,g)$ be a $2n$-dimensional almost complex manifold with
Norden metric, i.~e. $J$ is an almost complex structure and $g$ is
a metric on $M$ such that
\begin{equation}\label{Jg}
J^2X=-X, \qquad g(JX,JY)=-g(X,Y)
\end{equation}
for all differentiable vector fields $X$, $Y$ on $M$, i.~e. $X, Y
\in \X(M)$.

The associated metric $\tilde{g}$ of $g$ on $M$ given by
$\tilde{g}(X,Y)=g(X,JY)$ for all $X, Y \in \X(M)$ is a Norden
metric, too. Both metrics are necessarily of signature $(n,n)$.
The manifold $(M,J,\tilde{g})$ is an almost complex manifold with
Norden metric, too.

Further, $X$, $Y$, $Z$, $U$ ($x$, $y$, $z$, $u$, respectively)
will stand for arbitrary differentiable vector fields on $M$
(vectors in $T_pM$, $p\in M$, respectively).

The Levi-Civita connection of $g$ is denoted by $\nabla$. The
tensor field $F$ of type $(0,3)$ on $M$ is defined by
\begin{equation}\label{F}
F(X,Y,Z)=g\bigl( \left( \nabla_X J \right)Y,Z\bigr).
\end{equation}
It has the following symmetries
\begin{equation}\label{F-prop}
F(X,Y,Z)=F(X,Z,Y)=F(X,JY,JZ).
\end{equation}

Further, let $\{e_i\}$ ($i=1,2,\dots,2n$) be an arbitrary basis of
$T_pM$ at a point $p$ of $M$. The components of the inverse matrix
of $g$ are denoted by $g^{ij}$ with respect to the basis
$\{e_i\}$.

The Lie form $\theta$ associated with $F$ is defined by
\begin{equation}\label{theta}
\theta(z)=g^{ij}F(e_i,e_j,z).
\end{equation}

A classification of the considered manifolds with respect to $F$
is given in Ref.~\refcite{GaBo}. Eight classes of almost complex
manifolds with Norden metric are characterized there according to
the properties of $F$. The three basic classes are given as
follows
\begin{equation}\label{class}
\begin{array}{l}
\W_1: F(x,y,z)=\frac{1}{4n} \left\{
g(x,y)\theta(z)+g(x,z)\theta(y)\right. \\[4pt]
\phantom{\mathcal{W}_1: F(x,y,z)=\frac{1}{4n} }\left.
    +g(x,J y)\theta(J z)
    +g(x,J z)\theta(J y)\right\};\\[4pt]
\W_2: \mathop{\s} \limits_{x,y,z}
F(x,y,J z)=0,\quad \theta=0;\\[8pt]
\W_3: \mathop{\s} \limits_{x,y,z} F(x,y,z)=0,
\end{array}
\end{equation}
where $\s $ is the cyclic sum by three arguments.

The special class $\W_0$ of the K\"ahler manifolds with Norden
metric belonging to any other class is determined by the condition
$F=0$.

\subsection{Curvature properties}\label{sec-curv}

Let $R$ be the curvature tensor field of $\nabla$ defined by
\begin{equation}\label{R}
    R(X,Y)Z=\nabla_X \nabla_Y Z - \nabla_Y \nabla_X Z -
    \nabla_{[X,Y]}Z.
\end{equation}
The corresponding tensor field of type $(0,4)$ is
determined as follows
\begin{equation}\label{R04}
    R(X,Y,Z,U)=g(R(X,Y)Z,U).
\end{equation}
The Ricci tensor $\rho$ and the scalar curvature $\tau$ are
defined as usual by
\begin{equation}\label{rho-tau}
    \rho(y,z)=g^{ij}R(e_i,y,z,e_j),\qquad \tau=g^{ij}\rho(e_i,e_j).
\end{equation}

Let $\alpha=\{x,y\}$ be a non-degenerate 2-plane (i.~e.
$\pi_1(x,y,y,x)=g(x,x)g(y,y)-g(x,y)^2 \neq 0$) spanned by vectors
$x, y \in T_pM, p\in M$. Then, it is known, the sectional
curvature of $\alpha$ is defined by the following equation
\begin{equation}\label{k}
    k(\alpha)=k(x,y)=\frac{R(x,y,y,x)}{\pi_1(x,y,y,x)}.
\end{equation}

The basic sectional curvatures in $T_pM$ with an almost complex
structure and a Norden metric $g$ are
\begin{itemlist}
    \item \emph{holomorphic sectional curvatures} if $J\alpha=\alpha$;
    \item \emph{totally real sectional curvatures} if
    $J\alpha\perp\alpha$ with respect to $g$.
\end{itemlist}

\subsection{Isotropic K\"ahler manifolds}\label{sec-iK}
The square norm $\nJ$ of $\nabla J$ is defined in
Ref.~\refcite{GRMa} by
\begin{equation}\label{snorm}
    \nJ=g^{ij}g^{kl}
    g\bigl(\left(\nabla_{e_i} J\right)e_k,\left(\nabla_{e_j}
    J\right)e_l\bigr).
\end{equation}

Having in mind the definition \eqref{F} of the tensor $F$ and the
properties \eqref{F-prop}, we obtain the following equation for
the square norm of $\nabla J$
\begin{equation}\label{snormF}
    \nJ=g^{ij}g^{kl}g^{pq}F_{ikp}F_{jlq},
\end{equation}
where $F_{ikp}=F(e_i,e_k,e_p)$.

\begin{definition}[\cite{MekMan}]\label{iK}
An almost complex manifold with Norden metric satisfying the
condition $\nJ=0$ is called an \emph{isotropic K\"ahler manifold
with Norden metric}.
\end{definition}

\begin{remark}
It is clear, if a manifold belongs to the class $\W_0$, then
it is isotropic K\"ahlerian but the inverse statement is not
always true.
\end{remark}

\section{Lie groups as Quasi-K\"ahler manifolds with Killing Norden
metric}\label{sec-qK}

The only class of the three basic classes, where the almost
complex structure is not integrable, is the class $\W_3$ -- the
class of the \emph{quasi-K\"ahler manifolds with Norden metric}.

Let us remark that the definitional condition from \eqref{class}
implies the vanishing of the Lie form $\theta$ for the class
$\W_3$.

Let $V$ be a $2n$-dimensional vector space and consider the
structure of the Lie algebra defined by the brackets $
[E_i,E_j]=C_{ij}^kE_k, $ where $\{E_1,E_2,\dots,E_{2n}\}$ is a
basis of $V$ and $C_{ij}^k\in \R$.

Let $G$ be the associated connected Lie group and
$\{X_1,X_2,\dots,X_{2n}\}$ be a global basis of left invariant
vector fields induced by the basis of $V$. Then the Jacobi
identity has the form
\begin{equation}\label{Jac}
    \mathop{\s} \limits_{X_i,X_j,X_k}
    \bigl[[X_i,X_j],X_k\bigr]=0.
\end{equation}

Next we define an almost complex structure by the conditions
\begin{equation}\label{J}
JX_i=X_{n+i},\quad JX_{n+i}=-X_i,\qquad i\in\{1,2,\dots,n\}.
\end{equation}
Let us consider the left invariant metric defined by the following
way
\begin{equation}\label{g}
\begin{array}{l}
  g(X_i,X_i)=-g(X_{n+i},X_{n+i})=1, \qquad i\in\{1,2,\dots,n\} \\[4pt]
  g(X_j,X_k)=0,\qquad j\neq k \in \{1,2,\dots,2n\}. \\
\end{array}
\end{equation}
The introduced metric is a Norden metric because of \eqref{J}.

In this way, the induced $2n$-dimensional manifold $(G,J,g)$ is an
almost complex manifold with Norden metric, in short \emph{almost
Norden manifold}.

The condition the Norden metric $g$ be a Killing metric of the Lie
group $G$ with the corresponding Lie algebra $\g$ is $g(\ad
X(Y),Z)=-g(Y,\ad X(Z)$, where $X,Y,Z \in \g$ and $\ad X
(Y)=[X,Y]$. It is equivalent to the condition the metric $g$ to be
an invariant metric, i.~e.
\begin{equation}\label{inv}
    g\left([X,Y],Z\right)+g\left([X,Z],Y\right)=0.
\end{equation}

\begin{theorem}\label{Th:Kil_g}
    If $(G,J,g)$ is an almost Norden manifold with a Killing metric $g$, then it is:
    \begin{romanlist}[(ii)]
    \item
    a $\W_3$-manifold;
    \smallskip
    \item
    a locally symmetric manifold.
    \end{romanlist}
\end{theorem}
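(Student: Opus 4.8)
The plan is to exploit the fact that for a left-invariant metric on a Lie group the Levi-Civita connection is algebraically determined by the structure constants, and that invariance \eqref{inv} makes this description especially simple. First I would use the Koszul formula: for left-invariant vector fields $X,Y,Z$ the term $g(\nabla_X Y,Z)$ reduces to $\tfrac12\{g([X,Y],Z)-g([Y,Z],X)+g([Z,X],Y)\}$, and then invoking \eqref{inv} twice collapses this to $g(\nabla_X Y,Z)=\tfrac12 g([X,Y],Z)$, i.e. $\nabla_X Y=\tfrac12[X,Y]$. This single identity is the workhorse for both parts.

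For part (i) I would compute $F(X,Y,Z)=g((\nabla_X J)Y,Z)=g(\nabla_X(JY),Z)-g(J\nabla_X Y,Z)$. Using $\nabla_X Y=\tfrac12[X,Y]$ this becomes $\tfrac12\{g([X,JY],Z)-g(J[X,Y],Z)\}=\tfrac12\{g([X,JY],Z)+g([X,Y],JZ)\}$, where I used the anti-isometry \eqref{Jg} in the form $g(JU,Z)=-g(U,JZ)$. Now I must show the cyclic sum $\s_{X,Y,Z}F(X,Y,Z)$ vanishes, which by \eqref{class} places $(G,J,g)$ in $\W_3$. Expanding the cyclic sum and regrouping, each group of terms is a cyclic-type expression in the Lie bracket to which the Jacobi identity \eqref{Jac} — together with the invariance relation \eqref{inv} used to move $J$'s around — can be applied; the combination is engineered so that the Jacobi identity forces cancellation. (Since $\theta=0$ is automatic once $F\in\W_3$ by the remark after \eqref{class}, and in any case $\theta(z)=g^{ij}F(e_i,e_j,z)$ is easily seen to vanish from the skew-symmetry of $[\cdot,\cdot]$ against \eqref{inv}, there is nothing extra to check there.)

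For part (ii) I would show $\nabla R=0$. Again everything is algebraic: from $\nabla_X Y=\tfrac12[X,Y]$ one gets $R(X,Y)Z=-\tfrac14[[X,Y],Z]$ after simplifying \eqref{R} (the $\nabla_{[X,Y]}Z$ term contributes $-\tfrac12[[X,Y],Z]$ and the two second-covariant-derivative terms contribute $\tfrac14[X,[Y,Z]]-\tfrac14[Y,[X,Z]]=\tfrac14[[X,Y],Z]$ by Jacobi). Then $(\nabla_W R)(X,Y)Z = \nabla_W(R(X,Y)Z)-R(\nabla_W X,Y)Z-\cdots$, and substituting $\nabla_W(\cdot)=\tfrac12[W,\cdot]$ and the bracket formula for $R$ turns $\nabla R$ into a sum of fourfold iterated brackets; the Jacobi identity applied repeatedly shows this sum is zero. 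Concretely, $(\nabla_W R)(X,Y)Z$ is a constant multiple of $\s$-type combinations of $[W,[[X,Y],Z]]$, $[[W,X]\text{-bracketed terms}]$, etc., all of which cancel by Jacobi — this is the standard computation showing that a Lie group with a bi-invariant metric is locally symmetric.

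The main obstacle is the bookkeeping in part (i): one must carefully track the signs produced by the anti-isometry $g(JU,V)=-g(U,JV)$ (which differs from the Hermitian case) when moving $J$ through the cyclic sum, and verify that the resulting bracket expression is exactly of the form annihilated by Jacobi rather than some nearby expression that is not. Part (ii) is conceptually routine — it is the classical bi-invariant-metric argument — but still requires organizing the iterated brackets so that the Jacobi cancellation is transparent.
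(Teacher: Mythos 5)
Your part (i) has a genuine gap, and it sits exactly where you flag the ``bookkeeping''. For a Norden metric the condition \eqref{Jg} makes $J$ \emph{self-adjoint} with respect to $g$: replacing $Y$ by $JY$ in $g(JX,JY)=-g(X,Y)$ gives $g(JX,Y)=g(X,JY)$. The rule you invoke, $g(JU,Z)=-g(U,JZ)$, is the Hermitian (skew-adjoint) rule and is false here. With the correct rule your own computation yields $F(X,Y,Z)=\frac{1}{2}\bigl\{g([X,JY],Z)-g([X,Y],JZ)\bigr\}$, i.e. the paper's \eqref{F-inv}, and then the cyclic sum vanishes by the invariance \eqref{inv} alone: ad-skewness gives $g([X,JY],Z)=-g([X,Z],JY)=g([Z,X],JY)$ and cyclically, so $\mathop{\s}_{X,Y,Z}g([X,JY],Z)=\mathop{\s}_{X,Y,Z}g([X,Y],JZ)$ and the two groups of terms cancel against each other. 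No Jacobi identity is involved --- and it could not be the mechanism, since the expression contains no double brackets for \eqref{Jac} to act on. With your sign, the cyclic sum of your expression equals $\mathop{\s}_{X,Y,Z}g([X,Y],JZ)$, which is not zero in general: already in the paper's 3-parametric family (\tabref{table2-}) the triple $(X_1,X_2,X_4)$ gives the value $\lambda_2$. So the cancellation you hope to ``engineer'' via Jacobi does not exist; the proof of (i) is repaired simply by using the correct adjointness of $J$ and then \eqref{inv}.

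Part (ii) is correct: it is the classical bi-invariant-metric argument ($\nabla_XY=\frac{1}{2}[X,Y]$, hence $R(X,Y)Z=-\frac{1}{4}[[X,Y],Z]$, and $(\nabla_WR)(X,Y)Z$ collapses to zero after two applications of the Jacobi identity), and it is in substance the same computation the paper performs at the level of the $(0,4)$-tensor via \eqref{invRijks}, \eqref{Jac} and \eqref{inv}; your version even avoids the metric until the end, which is a mild simplification.
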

\begin{proof}
(i) Let $\nabla$ be the Levi-Civita connection of $g$. Then the
condition  \eqref{inv} implies consecutively
\begin{eqnarray}
    \nabla_{X_i} X_j=\frac{1}{2}[X_i,X_j],\quad
    i,j\in\{1,2,\dots,2n\},\label{invLC}\\
    F(X_i,X_j,X_k)=\frac{1}{2}\Bigl\{g\bigl( [X_i, JX_j],X_k\bigr) -
    g\bigl([X_i, X_j],JX_k \bigr) \Bigr\}.\label{F-inv}
\end{eqnarray}
According to \eqref{inv} the last equation implies
$\mathop{\s}_{X_i,X_j,X_k} F(X_i,X_j,X_k)=0,$ i.~e. the manifold
belongs to the class $\W_3$.

(ii) The following form of the curvature tensor is given in
Ref.~\refcite{GrMaMe-4dim1}
\[
    R(X_i,X_j,X_k,X_l)=-\frac{1}{4}g\Bigl(\bigl[[X_i,X_j],X_k\bigr],X_l]\Bigr).
\]
Using the condition \eqref{inv} for a Killing metric, we obtain
\begin{equation}\label{invRijks}
    R(X_i,X_j,X_k,X_l)=-\frac{1}{4}g\Bigl([X_i,X_j],[X_k,X_l]\Bigr).
\end{equation}

According to the constancy of the component $R_{ijks}$ and
\eqref{invLC} and \eqref{invRijks}, we get the covariant
derivative of the tensor $R$ of type $(0,4)$ as follows
\begin{equation}\label{nablaR}
\begin{array}{l}
    \left( \nabla_{X_i} R
    \right)(X_j,X_k,X_l,X_m)\\[4pt]
    \phantom{\qquad\qquad\qquad}
    =\frac{1}{8}
 \Bigl\{
        g\Bigl(\bigl[[X_i,X_j],X_k\bigr]-\bigl[[X_i,X_k],X_j\bigr],[X_l,X_m]\bigr]\Bigr)\\[4pt]
 \phantom{\qquad\qquad\qquad}
        +g\Bigl(\bigl[[X_i,X_l],X_m\bigr]-\bigl[[X_i,X_m],X_l\bigr],[X_j,X_k]\bigr]\Bigr)
    \Bigr\}.
\end{array}
\end{equation}
We apply the the Jacobi identity \eqref{Jac} to the double
commutators. Then the equation \eqref{nablaR} gets the form
\begin{equation}\label{nablaR=}
\begin{array}{l}
    \left( \nabla_{X_i} R \right)(X_j,X_k,X_l,X_m)=
    -\frac{1}{8}
    \Bigl\{
        g\Bigl(\bigl[X_i,[X_j,X_k]\bigr],[X_l,X_m]\Bigr)\\[4pt]
    \phantom{\left( \nabla_{X_i} R \right)(X_j,X_k,X_l,X_m)=\frac{1}{8}\Bigl\{}
    +g\Bigl(\bigl[X_i,[X_l,X_m]\bigr],[X_j,X_k]\bigr)\Bigr)
    \Bigr\}.
\end{array}
\end{equation}

Since $g$ is a Killing metric, then applying \eqref{inv} to
\eqref{nablaR=} we obtain the identity $\nabla R=0$, i.~e. the
manifold is locally symmetric.
\end{proof}


\section{The Lie Group as a 6-Dimensional $\W_3$-Manifold}
\label{sec_6dim}

Let $(G,J,g)$ be a 6-dimensional almost Norden manifold with
Killing metric $g$. Having in mind \thmref{Th:Kil_g} we assert
that $(G,J,g)$ is a $\W_3$-manifold. Let the commutators have the
following decomposition
\begin{equation}\label{[]}
    [X_i,X_j]=\gamma_{ij}^k X_k,\quad \gamma_{ij}^k \in \R,\qquad
    i,j,k\in\{1,2,\dots,6\}.
\end{equation}

Further we consider the special case when the following two
conditions are satisfied
\begin{equation}\label{usl}
    g\Bigl([X_i,X_j],[X_k,X_l]\Bigr)=0, \qquad 
    g\Bigl([X_i,JX_i],[X_i,JX_i]\Bigr)=0
\end{equation}
for all different indices $i,j,k,l$ in $\{1,2,\dots,6\}$. In other
words, the commutators of the different basis vectors are mutually
orthogonal and moreover the commutators of the holomorphic
sectional bases are isotropic vectors with respect to the Norden
metric $g$.

According to the condition \eqref{inv} for a Killing metric $g$,
the Jacobi identity \eqref{Jac} and the condition \eqref{usl}, the
equations \eqref{[]} take the form given in \tabref{table2-}.

\begin{table}
\tbl{The Lie brackets with 3 parameters.}
{\begin{tabular}{@{}c|cccccc@{}}\toprule
              & $X_1$ & $X_2$ & $X_3$ & $X_4$ & $X_5$ & $X_6$ \\
\colrule $[X_2,X_3]$ &              &              & & & $
\lambda_1$ & $ \lambda_2$ \\
  $[X_3,X_1]$ &              &              &              & $ \lambda_1$ &              & $ \lambda_3$ \\
  $[X_1,X_2]$ &              &              &              & $ \lambda_2$ & $ \lambda_3$ &              \\
  $[X_5,X_6]$ &              & $-\lambda_1$ & $-\lambda_2$ &              &              &              \\
  $[X_6,X_4]$ & $-\lambda_1$ &              & $-\lambda_3$ &              &              &              \\
  $[X_4,X_5]$ & $-\lambda_2$ & $-\lambda_3$ &              &              &              &              \\
  $[X_1,X_4]$ &              & $ \lambda_2$ & $-\lambda_1$ &              & $ \lambda_2$ & $-\lambda_1$ \\
  $[X_1,X_5]$ &              & $ \lambda_3$ &              & $-\lambda_2$ &              &              \\
  $[X_1,X_6]$ &              &              & $-\lambda_3$ & $ \lambda_1$ &              &              \\
  $[X_2,X_4]$ & $-\lambda_2$ &              &              &              & $ \lambda_3$ &              \\
  $[X_2,X_5]$ & $-\lambda_3$ &              & $ \lambda_1$ & $-\lambda_3$ &              & $ \lambda_1$ \\
  $[X_2,X_6]$ &              &              & $ \lambda_2$ &              & $-\lambda_1$ &              \\
  $[X_3,X_4]$ & $ \lambda_1$ &              &              &              &              & $-\lambda_3$ \\
  $[X_3,X_5]$ &              & $-\lambda_1$ &              &              &              & $ \lambda_2$ \\
  $[X_3,X_6]$ & $ \lambda_3$ & $-\lambda_2$ &              & $ \lambda_3$ & $-\lambda_2$ &              \\
\botrule\end{tabular} } \label{table2-}
\end{table}

The Lie groups $G$ thus obtained are of a family which is
characterized by three real parameters $\lambda_i$ $(i = 1, 2,
3)$. Therefore, for the manifold $(G,J,g)$ constructed above, we
establish the truthfulness of the following
\begin{theorem}\label{Th:ex}
Let $(G,J,g)$ be a 6-dimensional almost Norden manifold, where $G$
is a connected Lie group with corresponding Lie algebra $\g$
determined by the global basis of left invariant vector fields
$\{X_1,X_2,\dots,X_6\}$; $J$ is an almost complex structure
defined by \eqref{J} and $g$ is an invariant Norden metric
determined by \eqref{g} and \eqref{inv}. Then $(G,J,g)$ is a
quasi-K\"ahler manifold with Norden metric if and only if $G$
belongs to the 3-parametric family of Lie groups determined by
\tabref{table2-}.
\end{theorem}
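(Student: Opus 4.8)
The plan is to prove the two implications of the equivalence separately, after one structural observation: since $g$ is assumed to be an invariant (equivalently, Killing) metric, \thmref{Th:Kil_g}(i) already forces $(G,J,g)$ to be a $\W_3$-manifold. So the quasi-K\"ahler hypothesis contributes nothing over the standing data of \eqref{J}, \eqref{g}, \eqref{inv} together with the special conditions \eqref{usl}, and the genuine content of the theorem is that this standing data is realised by precisely the Lie brackets of \tabref{table2-}.

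For the implication ``$\Leftarrow$'' I would verify directly, starting from \tabref{table2-}, that these brackets satisfy the Jacobi identity \eqref{Jac} on every triple $X_i,X_j,X_k$ (so that $\g$ is a Lie algebra), that the metric \eqref{g} satisfies the invariance condition \eqref{inv} for them, and that both equations \eqref{usl} hold. The invariance then puts us under the hypotheses of \thmref{Th:Kil_g}, whose part (i) gives that $(G,J,g)$ is a $\W_3$-manifold, i.e. quasi-K\"ahler with Norden metric.

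For the implication ``$\Rightarrow$'', the computational core, I would first pass from \eqref{[]} to the quantities $\gamma_{ijk}:=g\bigl([X_i,X_j],X_k\bigr)$. Antisymmetry of the bracket gives $\gamma_{ijk}=-\gamma_{jik}$ and \eqref{inv} gives $\gamma_{ijk}=-\gamma_{ikj}$; since the transpositions $(1\,2)$ and $(2\,3)$ generate $S_3$, the array $\gamma_{ijk}$ is totally skew-symmetric, leaving $20$ free entries $\gamma_{ijk}$ with $i<j<k$. I would then translate the remaining constraints into polynomial equations in these unknowns. Writing $\varepsilon_1=\varepsilon_2=\varepsilon_3=1$, $\varepsilon_4=\varepsilon_5=\varepsilon_6=-1$ for the signature, the orthogonality part of \eqref{usl} becomes $\sum_e\varepsilon_e\gamma_{ije}\gamma_{kle}=0$ for all distinct $i,j,k,l$, the isotropy part of \eqref{usl} becomes $\sum_e\varepsilon_e\gamma_{i,i+3,e}^2=0$ for $i=1,2,3$, and the Jacobi identity \eqref{Jac} becomes the vanishing, for all $i,j,k,l$, of the $X_l$-coefficient $\sum_e\bigl(\gamma_{ij}^{e}\gamma_{ek}^{l}+\gamma_{jk}^{e}\gamma_{ei}^{l}+\gamma_{ki}^{e}\gamma_{ej}^{l}\bigr)$ of $\s_{X_i,X_j,X_k}\bigl[[X_i,X_j],X_k\bigr]$. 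Solving this combined quadratic system should exhibit three of the $\gamma_{ijk}$ as free parameters $\lambda_1,\lambda_2,\lambda_3$ and force every other structure constant to equal $0$ or $\pm\lambda_m$, reproducing \tabref{table2-}.

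I expect the main obstacle to be the nonlinear bookkeeping in that last step: one has $20$ unknowns subject to many quadratic relations from \eqref{usl} and a further batch from \eqref{Jac}, and must recover all solutions with no spurious ones. I would fix the order of elimination: first use the orthogonality relations of \eqref{usl}, which make many products $\gamma_{ije}\gamma_{kle}$ cancel and thus sharply restrict which structure constants can be simultaneously nonzero; then feed the survivors into \eqref{Jac} to get the linear identifications among them; finally bring in the isotropy relations of \eqref{usl} to pin down the remaining signs and pairings. Consistency of the outcome is guaranteed by the verification already made in the ``$\Leftarrow$'' direction, which closes the argument; and no curvature computation is needed, $\W_3$-membership being automatic on both sides via \thmref{Th:Kil_g}.
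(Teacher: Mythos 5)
Your proposal follows essentially the same route as the paper: the paper's own argument is exactly the reduction of the brackets \eqref{[]} to \tabref{table2-} by imposing \eqref{inv}, \eqref{Jac} and \eqref{usl}, with the converse supplied by \thmref{Th:Kil_g}(i), and your plan organizes that same computation (total skew-symmetry of $\gamma_{ijk}=g\bigl([X_i,X_j],X_k\bigr)$, then the quadratic system coming from \eqref{usl} and \eqref{Jac}). Your reading that \eqref{usl} belongs to the standing data of the construction, so that $\W_3$-membership is automatic on both sides via \thmref{Th:Kil_g}, matches the paper's intent.
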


Let us remark, the Killing form $B(X,Y)=\tr(\ad X \ad Y)$,
$X,Y\in\g$, on the constructed Lie algebra $\g$ has the following
form
\[
B=4\left(%
\begin{array}{c|c}
  L & -L \\
  \hline
  -L & L \\
\end{array}%
\right),\qquad
L=\left(%
\begin{array}{ccc}
  \lambda_3^2 & -\lambda_2\lambda_3 & -\lambda_1\lambda_3 \\
  -\lambda_2\lambda_3 & \lambda_2^2 & -\lambda_1\lambda_2 \\
  -\lambda_1\lambda_3 & -\lambda_1\lambda_2 & \lambda_1^2 \\
\end{array}%
\right).
\]
Obviously, it is degenerate.


\section{Geometric characteristics of the constructed manifold}

Let $(G,J,g)$ be the 6-dimensional $\W_3$-manifold introduced in
the previous section.

\subsection{The components of the tensor $F$}
Then by direct calculations, having in mind \eqref{F}, \eqref{J},
\eqref{g}, \eqref{inv}, \eqref{invLC}, \eqref{F-inv} and
\tabref{table2-}, we obtain the nonzero components of the tensor
$F$ as follows

\begin{equation}\label{Fijk-1}
\begin{array}{ll}
\lambda_1&=2F_{116}=2F_{161}=-2F_{134}=-2F_{143}=2F_{223}=2F_{232}\\[4pt]
\phantom{\lambda_1} &=2F_{256}=2F_{265}=-F_{322}=-F_{355}=2F_{413}=2F_{431} \\[4pt]
\phantom{\lambda_1} &=2F_{446}=2F_{464}=-2F_{526}=-2F_{562}=2F_{535}=2F_{553}\\[4pt]
\phantom{\lambda_1} &=-F_{611}=-F_{644}=-2F_{113}=-2F_{131}=-2F_{146}=-2F_{164}\\[4pt]
\phantom{\lambda_1} &=-2F_{226}=-2F_{262}=2F_{235}=2F_{253}=F_{311}=F_{344}=2F_{416}\\[4pt]
\phantom{\lambda_1} &=2F_{461}=-2F_{434}=-2F_{443}=-2F_{523}=-2F_{532}=-2F_{556}\\[4pt]
\phantom{\lambda_1} &=-2F_{565}=F_{622}=F_{655},\\[4pt]
\end{array}
\end{equation}
\begin{equation}\label{Fijk-2}
\begin{array}{ll}
\lambda_2&=-2F_{115}=-2F_{151}=2F_{124}=2F_{142}=F_{233}=F_{266}=-2F_{323}\\[4pt]
\phantom{\lambda_2} &=-2F_{332}=-2F_{356}=-2F_{365}=-2F_{412}=-2F_{421}=-2F_{445}\\[4pt]
\phantom{\lambda_2} &=-2F_{454}=F_{511}=F_{544}=-2F_{626}=-2F_{662}=2F_{635}=2F_{653}\\[4pt]
\phantom{\lambda_2}&=2F_{112}=2F_{121}=2F_{145}=2F_{154}=-F_{211}=-F_{244}=-2F_{326}\\[4pt]
\phantom{\lambda_2} &=-2F_{362}=2F_{335}=2F_{353}=-2F_{415}=-2F_{451}=2F_{424}\\[4pt]
\phantom{\lambda_2} &=2F_{442}=-F_{533}=-F_{566}=2F_{623}=2F_{632}=2F_{656}=2F_{665},\\[4pt]
\end{array}
\end{equation}
\begin{equation}\label{Fijk-3}
\begin{array}{ll}
\lambda_3&=-F_{133}=-F_{166}=-2F_{215}=-2F_{251}=2F_{224}=2F_{242}=2F_{313}\\[4pt]
\phantom{\lambda_3} &=2F_{331}=2F_{346}=2F_{364}=-F_{422}=-F_{455}=2F_{512}=2F_{521} \\[4pt]
\phantom{\lambda_3} &=2F_{545}=2F_{554}=2F_{616}=2F_{661}=-2F_{634}=-2F_{643}=F_{122}\\[4pt]
\phantom{\lambda_3} &=F_{155}=-2F_{212}=-2F_{221}=-2F_{245}=-2F_{254}=2F_{316}\\[4pt]
\phantom{\lambda_3} &=2F_{361}=-2F_{334}=-2F_{343}=F_{433}=F_{466}=-2F_{515}=-2F_{551}\\[4pt]
\phantom{\lambda_3} &=2F_{524}=2F_{542}=-2F_{613}=-2F_{631}=-2F_{646}=-2F_{664}.\\[4pt]
\end{array}
\end{equation}
where $F_{ijk}=F(X_i,X_j,X_k)$.

\subsection{The square norm of $\nabla J$}
According to \eqref{g} and  \eqref{Fijk-1}--\eqref{Fijk-3}, from
\eqref{snormF} we obtain that the square norm of $\nabla J$ is
zero, i.~e. $\nJ=0$. Then we have the following
\begin{proposition}\label{Prop:iK}
    The manifold $(G,J,g)$ is isotropic K\"ahlerian.
\end{proposition}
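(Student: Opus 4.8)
The plan is to substitute the explicit list of nonzero components of $F$ computed in \eqref{Fijk-1}--\eqref{Fijk-3} directly into the formula \eqref{snormF} for $\nJ$ and verify that the total vanishes. Since the metric \eqref{g} is diagonal with $g(X_i,X_i)=\varepsilon_i$ where $\varepsilon_i=1$ for $i\in\{1,2,3\}$ and $\varepsilon_i=-1$ for $i\in\{4,5,6\}$, the inverse metric is also diagonal with $g^{ii}=\varepsilon_i$ and $g^{ij}=0$ for $i\neq j$. Hence \eqref{snormF} collapses to the single sum
\begin{equation}\label{snorm-diag}
\nJ=\sum_{i,k,p=1}^{6}\varepsilon_i\varepsilon_k\varepsilon_p\bigl(F_{ikp}\bigr)^2,
\end{equation}
so the task is reduced to a finite bookkeeping computation: group the nonvanishing $F_{ikp}$ by the value they equal ($\tfrac{1}{2}\lambda_1$, $\lambda_1$, $\tfrac{1}{2}\lambda_2$, etc.), read off the sign $\varepsilon_i\varepsilon_k\varepsilon_p$ of each, and check that the signed count in each $\lambda_a^2$-block is zero.

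First I would organize the components of \eqref{Fijk-1} into the two families appearing there: those equal to $\tfrac12\lambda_1$ and those equal to $\lambda_1$ (the entries like $-F_{322}=\cdots$, i.e. $F_{322}=-\tfrac12\lambda_1$ read with the stated sign, actually contribute $\tfrac14\lambda_1^2$ just the same). Using \eqref{F-prop} the components come in pairs $F_{ikp}=F_{ipk}$, which already pairs up equal squares; what matters is the index-type signature. For a triple with one "unbarred" index (from $\{1,2,3\}$) and two indices of mixed type, or three unbarred, etc., the product $\varepsilon_i\varepsilon_k\varepsilon_p$ is $+1$ or $-1$ according to the parity of the number of indices in $\{4,5,6\}$. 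Scanning \eqref{Fijk-1} one sees that the $\tfrac12\lambda_1$-type components split evenly between signature $+1$ and signature $-1$ (for instance $F_{116}$ has signature $\varepsilon_1\varepsilon_1\varepsilon_6=-1$ while $F_{223}$ has $\varepsilon_2\varepsilon_2\varepsilon_3=+1$, and so on through the list), so their contributions to \eqref{snorm-diag} cancel in pairs; the same holds for the $\lambda_1$-type components, and then identically for the $\lambda_2$- and $\lambda_3$-blocks by the analogous tallies in \eqref{Fijk-2} and \eqref{Fijk-3}. Summing the three blocks gives $\nJ=0$.

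With $\nJ=0$ established, the conclusion is immediate from \dfnref{iK}: an almost complex manifold with Norden metric satisfying $\nJ=0$ is by definition isotropic K\"ahlerian, hence $(G,J,g)$ is isotropic K\"ahlerian.

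The main obstacle is purely the combinatorial accounting in \eqref{snorm-diag}: the lists \eqref{Fijk-1}--\eqref{Fijk-3} are long, the same numerical value of $F_{ikp}$ recurs many times with different index signatures, and the cancellation is not a symmetry one can invoke abstractly — it must be checked that within each $\lambda_a^2$-block the number of index-triples of signature $+1$ equals the number of signature $-1$ (counted with the multiplicities forced by \eqref{F-prop}). Once that tally is done carefully for $\lambda_1$, the patterns for $\lambda_2$ and $\lambda_3$ follow by the same inspection, and there is no further analytic difficulty.
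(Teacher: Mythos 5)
Your proposal is correct and takes essentially the same route as the paper, which likewise just substitutes the components \eqref{Fijk-1}--\eqref{Fijk-3} into \eqref{snormF} using the diagonal metric \eqref{g}, observes that the signed sum of squares vanishes, and concludes by \dfnref{iK}. (One minor slip: $-F_{322}=\lambda_1$ means $F_{322}=-\lambda_1$, not $-\tfrac12\lambda_1$, so those entries contribute $\pm\lambda_1^2$ rather than $\tfrac14\lambda_1^2$; they still cancel within their own block, so your argument stands.)
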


\subsection{The components of $R$}
Let $R$ be the curvature tensor of type (0,4) determined by
\eqref{R04} and \eqref{R} on $(G,J,g)$. We denote its components
by $R_{ijks}=R(X_i,X_j,X_k,X_s)$; $i,j,k,s\in\{1,2,\dots,6\}$.
Using \eqref{invLC}, \eqref{Jac}, \eqref{invRijks} and
\tabref{table2-} we get the nonzero components of $R$ as follows
\[
\begin{array}{ll}
    -R_{1221}=R_{4554}=\frac{1}{4}\left(\lambda_2^2+\lambda_3^2\right),\quad
    &
    -R_{1551}=R_{2442}=\frac{1}{4}\left(\lambda_2^2-\lambda_3^2\right),\quad\\[4pt]
    -R_{1331}=R_{4664}=\frac{1}{4}\left(\lambda_1^2+\lambda_3^2\right),\quad
    &
    -R_{1661}=R_{3443}=\frac{1}{4}\left(\lambda_1^2-\lambda_3^2\right),\quad\\[4pt]
    -R_{2332}=R_{5665}=\frac{1}{4}\left(\lambda_1^2+\lambda_2^2\right),\quad
    &
    -R_{2662}=R_{3553}=\frac{1}{4}\left(\lambda_1^2-\lambda_2^2\right),\quad\\[4pt]
\end{array}
\]
\[
\begin{array}{l}
    R_{1361}=R_{2362}=-R_{4364}=-R_{5365}=\frac{1}{4}\lambda_1^2,\\[4pt]
    R_{1251}=R_{3253}=-R_{4254}=-R_{6256}=\frac{1}{4}\lambda_2^2,\\[4pt]
    R_{2142}=R_{3143}=-R_{5145}=-R_{6146}=\frac{1}{4}\lambda_3^2,\\[4pt]
\end{array}
\]
\[
\begin{array}{l}
    R_{1561}=R_{2562}=R_{3563}=-R_{4564}=-R_{1261}=-R_{3263}\\[4pt]
    \phantom{R_{1561}}=R_{4264}=R_{5265}=-R_{1351}=-R_{2352}=R_{4354}\\[4pt]
    \phantom{R_{1561}}=R_{6356}=R_{1231}=-R_{4234}=-R_{5235}=-R_{6236}=\frac{1}{4}\lambda_1\lambda_2,\\[4pt]
\end{array}
\]
\[
\begin{array}{l}
    -R_{1341}=-R_{2342}=R_{5345}=R_{6346}=R_{2132}=-R_{4134}\\[4pt]
    \phantom{-R_{1341}}=-R_{5135}=-R_{6136}==R_{1461}=R_{2462}=R_{3463}\\[4pt]
    \phantom{-R_{1341}}=-R_{5465}=-R_{2162}=-R_{3163}=R_{4164}=R_{5165}=\frac{1}{4}\lambda_1\lambda_3,\\[4pt]
\end{array}
\]
\[
\begin{array}{l}
    R_{3123}=-R_{4124}=-R_{5125}=-R_{6126}=-R_{1241}=-R_{3243}\\[4pt]
    \phantom{R_{1561}}=R_{5245}=R_{6246}==-R_{2152}=-R_{3153}=R_{4154}\\[4pt]
    \phantom{R_{1561}}=R_{6156}=R_{1451}=R_{2452}=R_{3453}=-R_{6456}=\frac{1}{4}\lambda_2\lambda_3.\\[4pt]
\end{array}
\]

\subsection{The components of $\rho$ and the value of $\tau$}
Having in mind \eqref{rho-tau} and the components of $R$, we
obtain the components $\rho_{ij}=\rho(X_i,X_j)$
$(i,j=1,2,\dots,6)$ of the Ricci tensor $\rho$ and the the value
of the scalar curvature $\tau$ as follows
\begin{equation}\label{rho_ij}
\begin{array}{c}
\begin{array}{ll}
    \rho_{11}=\rho_{44}=-\rho_{14}=-\lambda_3^2,\qquad
    \rho_{12}=-\rho_{15}=-\rho_{24}=\rho_{45}=\lambda_2\lambda_3,\\[4pt]
    \rho_{22}=\rho_{55}=-\rho_{25}=-\lambda_2^2,\qquad
    \rho_{13}=- \rho_{16}=-\rho_{34}=\rho_{46}=\lambda_1\lambda_3,\\[4pt]
    \rho_{33}=\rho_{66}=-\rho_{36}=-\lambda_1^2,\qquad
    \rho_{23}=-\rho_{26}=-\rho_{35}=\rho_{56}=\lambda_1\lambda_2,\\[4pt]
\end{array}
\end{array}
\end{equation}
\begin{equation}\label{tau}
    \tau=0.
\end{equation}

The last equation implies immediately
\begin{proposition}\label{Prop:tau=0}
    The manifold $(G,J,g)$ is scalar flat.
\end{proposition}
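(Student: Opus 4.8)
The plan is to obtain the proposition by evaluating $\tau$ directly from its definition; recall that \emph{scalar flat} simply means $\tau=0$. By \eqref{rho-tau} we have $\tau=g^{ij}\rho_{ij}$, so the only ingredients needed are the inverse metric and the Ricci components, both of which are already at hand. From \eqref{g} the Gram matrix of $g$ in the left-invariant basis $\{X_1,\dots,X_6\}$ is the diagonal matrix with entries $(1,1,1,-1,-1,-1)$; hence $g^{ij}$ is the same diagonal matrix, and $\tau=\sum_{i=1}^{3}\rho_{ii}-\sum_{i=4}^{6}\rho_{ii}=\rho_{11}+\rho_{22}+\rho_{33}-\rho_{44}-\rho_{55}-\rho_{66}$.

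Next I would substitute the Ricci components from \eqref{rho_ij}. They satisfy the pairwise equalities $\rho_{11}=\rho_{44}=-\lambda_3^2$, $\rho_{22}=\rho_{55}=-\lambda_2^2$ and $\rho_{33}=\rho_{66}=-\lambda_1^2$, so each of the three differences $\rho_{11}-\rho_{44}$, $\rho_{22}-\rho_{55}$, $\rho_{33}-\rho_{66}$ vanishes and therefore $\tau=0$, which is precisely \eqref{tau}. In effect there is no real obstacle here — the computational work was already carried out when deriving $R$ and $\rho$ — and the one structural point worth isolating is that the equality of the Ricci entries indexed by $i$ and $n+i$, together with the neutral signature $(3,3)$ of $g$ (which weights those two blocks with opposite sign), is exactly what forces the trace to cancel.

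Finally, as a coordinate-free check, I would note that this cancellation is not accidental. Since $g$ is a Killing (invariant) metric, \eqref{invRijks} gives $R(X_i,X_j,X_k,X_l)=-\frac{1}{4}g\bigl([X_i,X_j],[X_k,X_l]\bigr)$, and contracting twice with $g^{ij}$ and using the invariance identity \eqref{inv} one finds $\rho=-\frac{1}{4}B$, where $B(X,Y)=\tr(\ad X\,\ad Y)$ is the Killing form. The explicit block form of $B$ displayed after \thmref{Th:ex} then yields $g^{ij}B_{ij}=4\,\tr L-4\,\tr L=0$ (the two $4L$-blocks on the diagonal enter with opposite signs because of the signature), so $\tau=-\frac{1}{4}g^{ij}B_{ij}=0$ once more. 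Either route proves the proposition immediately.
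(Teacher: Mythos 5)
Your first argument is correct and is essentially the paper's own proof: the paper obtains \eqref{tau} exactly by contracting the Ricci components \eqref{rho_ij} with the inverse of the diagonal metric \eqref{g}, and your observation that the vanishing comes from $\rho_{ii}=\rho_{i+3,i+3}$ together with the signature $(3,3)$ just makes the cancellation explicit. Your second, coordinate-free route is a genuine addition not present in the paper: since \eqref{inv} gives $\nabla_{X}Y=\frac12[X,Y]$ by \eqref{invLC}, one gets $R(X,Y)Z=-\frac14\bigl[[X,Y],Z\bigr]$, hence $\rho(Y,Z)=\tr\bigl(X\mapsto R(X,Y)Z\bigr)=-\frac14\tr(\ad Z\,\ad Y)=-\frac14 B(Y,Z)$, the standard Ricci formula for a bi-invariant (here pseudo-Riemannian) metric; this is consistent with the explicit block form of $B$ displayed after \thmref{Th:ex} and with \eqref{rho_ij}, and then $\tau=-\frac14 g^{ij}B_{ij}=\tr L-\tr L=0$ without ever computing the individual curvature components. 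Two minor points of precision: contracting \eqref{invRijks} once (not twice) with the inverse metric already gives $\rho$, and the identity $\rho=-\frac14 B$ deserves the one-line derivation above rather than being asserted from \eqref{inv} alone; with that spelled out, your second argument also explains structurally why the scalar curvature must vanish here, namely because $B$ has the block pattern $\bigl(\begin{smallmatrix}L&-L\\-L&L\end{smallmatrix}\bigr)$ whose $g$-trace cancels.
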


\subsection{The sectional curvatures}

Let us consider the characteristic 2-planes $\alpha_{ij}$ spanned
by the basis vectors $\{X_i,X_j\}$ at an arbitrary point of the
manifold:
\begin{itemlist}
    \item holomorphic 2-planes - $\alpha_{14}$, $\alpha_{25}$, $\alpha_{36}$;
    \item pairs of totally real 2-planes -
    $\left(\alpha_{12}, \alpha_{45}\right)$; $\left(\alpha_{13}, \alpha_{46}\right)$;
    $\left(\alpha_{15}, \alpha_{24}\right)$; $\left(\alpha_{16}, \alpha_{34}\right)$;
    $\left(\alpha_{23}, \alpha_{56}\right)$; $\left(\alpha_{26},
    \alpha_{35}\right)$.
\end{itemlist}
Then, using \eqref{k}, \eqref{g} and the components of $R$, we
obtain the corresponding sectional curvatures
\[
\begin{array}{c}
\begin{array}{l}
k(\alpha_{14})=k(\alpha_{25})=k(\alpha_{36})=0;\\[4pt]
\end{array}
\; \\[4pt]
    \begin{array}{ll}
    -k(\alpha_{12})=k(\alpha_{45})=\frac{1}{4}\left(\lambda_2^2+\lambda_3^2\right),\;\;\;\;
&
    k(\alpha_{15})=-k(\alpha_{24})=\frac{1}{4}\left(\lambda_2^2-\lambda_3^2\right),\;\\[4pt]
    -k(\alpha_{13})=k(\alpha_{46})=\frac{1}{4}\left(\lambda_1^2+\lambda_3^2\right),\;
&
    k(\alpha_{16})=-k(\alpha_{34})=\frac{1}{4}\left(\lambda_1^2-\lambda_3^2\right),\;\\[4pt]
    -k(\alpha_{23})=k(\alpha_{56})=\frac{1}{4}\left(\lambda_1^2+\lambda_2^2\right),\;
&
    k(\alpha_{26})=-k(\alpha_{35})=\frac{1}{4}\left(\lambda_1^2-\lambda_2^2\right).\\[4pt]
\end{array}
\end{array}
\]
Therefore we have the following
\begin{proposition}\label{Prop:k=0}
    The manifold $(G,J,g)$ has zero holomorphic sectional curvatures.
\end{proposition}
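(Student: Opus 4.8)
The plan is to establish \propref{Prop:k=0} from the explicit form \eqref{invRijks} of the curvature tensor, rather than by examining each $J$-invariant $2$-plane on its own. A holomorphic $2$-plane at a point $p$ has the form $\alpha=\{x,Jx\}$ for some $x\in T_pM$, and here $\pi_1(x,Jx,Jx,x)=-\bigl(g(x,x)^2+g(x,Jx)^2\bigr)$, so $\alpha$ is non-degenerate exactly when $g(x,x)\neq 0$ or $g(x,Jx)\neq 0$; by \eqref{k} its sectional curvature is then $k(\alpha)=R(x,Jx,Jx,x)/\pi_1(x,Jx,Jx,x)$. Hence it is enough to show that $R(x,Jx,Jx,x)=0$ for \emph{every} $x$, which is a polynomial identity in the components of $x$ and which at once gives $k(\alpha)=0$ for all non-degenerate holomorphic planes.

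Extending \eqref{invRijks} by $\R$-multilinearity and using $[Jx,x]=-[x,Jx]$ one obtains
\[
    R(x,Jx,Jx,x)=-\frac{1}{4}\,g\bigl([x,Jx],[Jx,x]\bigr)=\frac{1}{4}\,g\bigl([x,Jx],[x,Jx]\bigr),
\]
so the assertion reduces to the claim that $[x,Jx]$ is an isotropic vector with respect to $g$ for every $x\in\g$. Writing $x=\sum_i a^iX_i$ and, by \eqref{J}, $Jx=a^1X_4+a^2X_5+a^3X_6-a^4X_1-a^5X_2-a^6X_3$, I would expand
\[
    [x,Jx]=\sum_i (a^i)^2[X_i,JX_i]+\sum_{i<j}a^ia^j\bigl([X_i,JX_j]+[X_j,JX_i]\bigr)
\]
by means of \tabref{table2-}; then $g([x,Jx],[x,Jx])$ is a homogeneous quartic in $a^1,\dots,a^6$. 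The coefficient of each $(a^i)^4$ equals $g\bigl([X_i,JX_i],[X_i,JX_i]\bigr)$, which vanishes by the second condition in \eqref{usl}; the remaining coefficients --- after eliminating, via the Jacobi identity \eqref{Jac}, the double commutators that appear when two indices coincide, exactly as in the proof of \thmref{Th:Kil_g} --- cancel among themselves, because of the sign pairing $g(X_i,X_i)=-g(X_{i+3},X_{i+3})$ from \eqref{g} and the symmetry of \tabref{table2-}. The three holomorphic characteristic planes $\alpha_{14}$, $\alpha_{25}$, $\alpha_{36}$ (note $JX_1=X_4$, $JX_2=X_5$, $JX_3=X_6$) are the diagonal instances $x=X_1,X_2,X_3$ of this identity, and $k(\alpha_{14})=k(\alpha_{25})=k(\alpha_{36})=0$ is already displayed in the list of sectional curvatures above.

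The main obstacle is purely organizational: \tabref{table2-} has fifteen nonzero bracket rows whose index patterns overlap heavily, so the quartic $g([x,Jx],[x,Jx])$ carries many monomials and the cancellation is not apparent term by term --- one has to track systematically which basis indices collide and the signs this forces through \eqref{g}. Once the reduction to the isotropy of $[x,Jx]$ via \eqref{invRijks} is in place, no conceptual difficulty remains.
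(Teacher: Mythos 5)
Your reduction step is fine: by \eqref{invRijks} and bilinearity, $R(x,Jx,Jx,x)=\tfrac{1}{4}\,g\bigl([x,Jx],[x,Jx]\bigr)$, so vanishing of the holomorphic sectional curvature of a non-degenerate plane $\{x,Jx\}$ is equivalent to isotropy of $[x,Jx]$. The gap is the final cancellation you assert without computing it: the mixed (cross-term) coefficients of the quartic $g([x,Jx],[x,Jx])$ do \emph{not} cancel, and in fact the identity you are aiming at is false. Take $x=X_1+X_2+X_6$, so $Jx=-X_3+X_4+X_5$, $g(x,x)=1$, $g(x,Jx)=0$, hence $\{x,Jx\}$ is a non-degenerate holomorphic $2$-plane with $\pi_1(x,Jx,Jx,x)=-1$. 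Using \tabref{table2-},
\begin{equation*}
[x,Jx]=-(\lambda_1+\lambda_2)X_1+(\lambda_1+\lambda_3)X_2+(\lambda_2-\lambda_3)X_3
+(\lambda_1-\lambda_2)X_4+(\lambda_3-\lambda_1)X_5+(\lambda_3-\lambda_2)X_6,
\end{equation*}
whence $g([x,Jx],[x,Jx])=4\lambda_1(\lambda_2+\lambda_3)$ by \eqref{g}, and therefore $k(x,Jx)=-\lambda_1(\lambda_2+\lambda_3)$, which is nonzero for generic $\lambda_1,\lambda_2,\lambda_3$. So the isotropy of $[x,Jx]$ holds only for special $x$ (in particular for the basis vectors, by the second condition in \eqref{usl}), not for all $x$, and no organizational effort will make the claimed cancellation appear.

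The paper proves something weaker and this is all the proposition is based on: it computes the components of $R$ from \eqref{invRijks} and \tabref{table2-} and then evaluates \eqref{k} only on the \emph{characteristic} holomorphic $2$-planes $\alpha_{14}$, $\alpha_{25}$, $\alpha_{36}$, where $k(\alpha_{14})=k(\alpha_{25})=k(\alpha_{36})=0$ follows at once because $[X_i,JX_i]$ is isotropic (the second condition in \eqref{usl}); equivalently, $R_{1441}$, $R_{2552}$, $R_{3663}$ do not occur among the nonzero components. Your argument correctly covers exactly these diagonal instances, so the repair is to restrict the statement you prove to the characteristic holomorphic planes (as the paper's own computation does), rather than to claim $R(x,Jx,Jx,x)=0$ for every $x\in\g$.
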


\subsection{The isotropic-K\"ahlerian property} Having in mind \propref{Prop:iK}--\propref{Prop:k=0} and \thmref{Th:Kil_g},
we give the following characteristics of the constructed manifold
\begin{theorem}
    The manifold $(G,J,g)$ constructed as an $\W_3$-manifold with Killing metric in \thmref{Th:ex}:
    \begin{romanlist}[(iv)]
    \item
    is isotropic K\"ahlerian;

    \smallskip

    \item
    is scalar flat;

    \smallskip

    \item
    is locally symmetric;

    \smallskip

    \item
    has zero holomorphic sectional curvatures.
    \end{romanlist}
\end{theorem}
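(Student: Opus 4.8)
The plan is to assemble the four claims directly from the propositions and theorem already proved, so the ``proof'' is essentially a bookkeeping argument that cites the preceding results in the right order. First I would recall that \thmref{Th:ex} guarantees that the manifold under consideration is precisely the $\W_3$-manifold built from \tabref{table2-}, so all the structural computations of Section~\ref{sec_6dim} and Section~5 apply verbatim.

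For item (i), I would simply invoke \propref{Prop:iK}: the square norm $\nJ$ was shown to vanish by substituting the components \eqref{Fijk-1}--\eqref{Fijk-3} of $F$ into \eqref{snormF}, hence the manifold is isotropic K\"ahlerian by \dfnref{iK}. For item (ii), I would cite \propref{Prop:tau=0}, which records $\tau=0$ from \eqref{tau}, i.~e.\ scalar flatness. For item (iii), I would appeal to \thmref{Th:Kil_g}(ii): since $g$ is by hypothesis an invariant (Killing) Norden metric, the identity $\nabla R=0$ established there shows the manifold is locally symmetric; no new computation is needed. For item (iv), I would cite \propref{Prop:k=0}, where the holomorphic sectional curvatures $k(\alpha_{14})$, $k(\alpha_{25})$, $k(\alpha_{36})$ were all found to be zero from the components of $R$.

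Since every ingredient is already in place, there is no genuine obstacle here; the only thing to be careful about is the logical dependency, namely that \propref{Prop:iK}--\propref{Prop:k=0} were stated for ``the 6-dimensional $\W_3$-manifold introduced in the previous section,'' which by \thmref{Th:ex} is exactly the manifold named in this theorem. So the proof is just: ``Apply \propref{Prop:iK}, \propref{Prop:tau=0}, \thmref{Th:Kil_g}(ii), and \propref{Prop:k=0} respectively.'' If a reader wanted more, one could additionally remark that (ii) and (iv) together are consistent with (i): for a $\W_3$-manifold one has the general Bochner-type relation expressing $\nJ$ through $\tau$ and the holomorphic sectional curvatures, so vanishing of the latter two forces $\nJ=0$ — but this is merely a sanity check, not part of the required argument.

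\begin{proof}
By \thmref{Th:ex}, the manifold $(G,J,g)$ is the 6-dimensional $\W_3$-manifold with Killing metric analysed in Section~\ref{sec_6dim} and in the present section, so Propositions~\ref{Prop:iK}--\ref{Prop:k=0} and \thmref{Th:Kil_g} apply to it. Statement (i) is \propref{Prop:iK}; statement (ii) is \propref{Prop:tau=0}; statement (iii) follows from \thmref{Th:Kil_g}(ii), since $g$ is an invariant (Killing) Norden metric and hence $\nabla R=0$; statement (iv) is \propref{Prop:k=0}.
\end{proof}
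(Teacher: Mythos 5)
Your proposal is correct and follows essentially the same route as the paper, which likewise obtains (i), (ii) and (iv) by citing \propref{Prop:iK}, \propref{Prop:tau=0} and \propref{Prop:k=0}, and (iii) from \thmref{Th:Kil_g}(ii) for the Killing metric $g$. The extra ``Bochner-type'' remark is unnecessary but harmless, as you note.
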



\end{document}